\numberwithin{equation}{section} \numberwithin{figure}{section}
\numberwithin{table}{section} \setlength{\oddsidemargin}{0in}
\theoremstyle{plain}
\theoremstyle{definition}
\newtheorem{thm}{Theorem}
\newtheorem{rem}{Remark}
\numberwithin{equation}{section} \numberwithin{lem}{section}
\numberwithin{thm}{section} \numberwithin{cor}{section}
\numberwithin{pro}{section} \numberwithin{rem}{section}
\begin{document}

\title[On the existence of dark solitons of the defocusing cubic NLSE]{On the existence of dark solitons of the defocusing cubic nonlinear Schr\"{o}dinger equation with periodic inhomogeneous nonlinearity}



\author{Christos Sourdis} \address{Department of Mathematics and Applied Mathematics, University of
Crete.}
              \email{csourdis@tem.uoc.gr}           



\keywords{nonlinear Schr\"{o}dinger equation, dark soliton,
heteroclinic orbit}

\maketitle

\begin{abstract}
We provide a simple proof of the  existence of dark solitons of
the defocusing cubic nonlinear Schr\"{o}dinger equation with
periodic inhomogeneous nonlinearity. Moreover, our proof allows
for a broader class of inhomogeneities and gives some new
properties of the solutions. We also apply our approach to the
defocusing cubic-quintic nonlinear Schr\"{o}dinger equation with a
periodic potential.
\end{abstract}

We consider the following defocusing cubic nonlinear
Schr\"{o}dinger equation with inhomogeneous nonlinearity on
$\mathbb{R}$:
\begin{equation}\label{eqNLS}
i\psi_t=-\frac{1}{2}\psi_{xx}+g(x)|\psi|^2\psi,
\end{equation}
with $g$ reasonably smooth, \begin{equation}\label{eqg1} g\ \
\textrm{being}\ T-\textrm{periodic}
\end{equation}
and satisfying
\begin{equation}\label{eqg2}
0<g_{min} \leq g(x)\leq g_{max}.
\end{equation}

The above problem arises in a variety of physical situations such
as Bose-Einstein condensates, nonlinear optics etc (we refer the
interested reader to \cite{alfi,toresProc,cuevas} and the
references therein for more details).


The solitary wave solutions of (\ref{eqNLS}) are given by
\begin{equation}\label{eqbs}\psi(x,t)=e^{i\lambda t} \phi(x),\end{equation} where $\phi$ is real
valued and solves
\begin{equation}\label{eqphi}
-\frac{1}{2}\phi_{xx}+\lambda \phi+g(x)\phi^3=0.
\end{equation}

A solitary wave solution $\psi$ of (\ref{eqNLS}) is called a
\emph{dark soliton} if the associated $\phi$ satisfies
\begin{equation}\label{eqasympt}
\frac{\phi(x)}{\phi_\pm(x)}\to 1\ \ \textrm{as}\ \ x\to \pm
\infty,
\end{equation}
where the functions $\phi_\pm$ are sign definite, $T$-periodic
solutions of (\ref{eqphi}).

It is worth mentioning that, in the case where $g$ is a constant,
the dark soliton can be represented explicitly and its stability
has been a topic of extensive investigations in recent years (see
\cite{gravejat} and the references therein).

If $\lambda\geq 0$, it was shown in \cite{toresProc} that the only
bounded solution of (\ref{eqphi}) is the trivial one. Therefore,
we will assume that
\begin{equation}\label{eqlambda}
\lambda<0.
\end{equation}

 Since the constant functions
 \[
\underline{\phi}=\sqrt{-\frac{\lambda}{g_{max}}}\  \ \textrm{and}
\ \ \bar{\phi}=\sqrt{-\frac{\lambda}{g_{min}}}
 \]
 are $T$-periodic lower and upper solutions respectively of (\ref{eqphi}), a well known result \cite{deCoster,opial} guarantees that
 (\ref{eqphi})
has a $T$-periodic solution $\phi_+$ such that
\begin{equation}\label{eqphiBound}
\sqrt{-\frac{\lambda}{g_{max}}}<\phi_+(x)<
\sqrt{-\frac{\lambda}{g_{min}}}.
\end{equation}
 Clearly, the function
\[
\phi_-(x)=-\phi_+(x)
\]
is also a $T$-periodic solution.

By combining several techniques from the classical theory of
ODE's, such as topological degree and free homeomorphisms (applied
to the Poincar\'{e} map), it was shown in \cite{toresProc} (see
also \cite{cuevas,toresCMP}) that (\ref{eqphi}) has a solution
that verifies (\ref{eqasympt}), under the additional assumptions
that $g$ is even and
\begin{equation}\label{eqrestrict}
g_{min}>\frac{g_{max}}{3}
\end{equation}
(in particular, the latter relation implies the uniqueness of a
$T$-periodic solution of (\ref{eqphi}) satisfying
(\ref{eqphiBound})).

In this note, we will give a simple and elementary variational
proof of this result which, in fact,  \emph{works without assuming
the last two restrictions}. At the same time, it gives some new
monotonicity properties of the obtained  solutions.

Our main observation is to adapt to this setting a remarkable
identity, due to \cite{lassouedmironescu}, which reduces
(\ref{eqphi}) to a weighted Allen-Cahn equation. This technique
has become a standard tool in the study of vortices in
inhomogeneous  equations of Ginzburg-Landau type  (see for
instance \cite{aftalionbook} and the references therein). In the
context of (\ref{eqNLS}), in the semiclassical regime, a related
idea appears in the recent paper \cite{pelimalo} which studies the
case of positive $g$ that diverges at respective infinities. The
main drawback to this approach is that, in principle, it works
only in the case of power nonlinearities. On the other hand, this
was not an issue for the approach in \cite{toresProc,toresCMP}. We
believe that this approach could also be useful  for the study of
the stability of the dark soliton in the spirit of \cite{gravejat}
and the references therein.

 More precisely, our main
result is the following. After its proof, we will remark on how to
apply this approach to the defocusing cubic-quintic nonlinear
Schr\"{o}dinger equation with a periodic potential that was
considered in \cite{toresCMP}.

\begin{thm}
Under the assumptions $g\in C(\mathbb{R})$, (\ref{eqg1}),
(\ref{eqg2}) and (\ref{eqlambda}), equation (\ref{eqphi}) admits a
solution $\phi$ that satisfies (\ref{eqasympt}). Moreover, it
holds that
\begin{equation}\label{eqpos}
\left(\frac{\phi}{\phi_+} \right)_x>0,\ \ x\in \mathbb{R},
\end{equation}
and there exists a constant $C_0>0$ such that
\begin{equation}\label{eqexp}
\left|\phi(x)-\phi_\pm(x)
\right|+\left|\left(\phi(x)-\phi_\pm(x)\right)_x \right|\leq
e^{-C_0|x|}, \ \ \pm x>0.
\end{equation}
\end{thm}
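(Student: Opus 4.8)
The plan is to linearize the problem around the periodic state $\phi_+$ through the Lassoued--Mironescu substitution. Writing $\phi=\phi_+u$ and using that $\phi_+$ solves \eqref{eqphi}, a direct computation turns \eqref{eqphi} into the weighted Allen--Cahn equation
\begin{equation*}
-\tfrac12\left(\phi_+^2u'\right)'+g\,\phi_+^4\left(u^2-1\right)u=0,
\end{equation*}
which is the Euler--Lagrange equation of the (renormalized) energy
\begin{equation*}
\cE(u)=\int_{\bbR}\left[\tfrac14\,\phi_+^2\,(u')^2+\tfrac14\,g\,\phi_+^4\left(u^2-1\right)^2\right]dx.
\end{equation*}
By \eqref{eqphiBound} the weight $\phi_+$ lies between the positive constants $\sqrt{-\lambda/g_{max}}$ and $\sqrt{-\lambda/g_{min}}$, so $\cE$ is comparable to the classical Allen--Cahn functional; its double--well potential vanishes exactly at $u=\pm1$, which correspond to $\phi=\phi_\pm$. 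In these variables \eqref{eqasympt} becomes the boundary conditions $u(\pm\infty)=\pm1$, while \eqref{eqpos} becomes $u'>0$. I would therefore fix a smooth monotone profile $\bar u$ with $\bar u(\pm\infty)=\pm1$ and minimize $\cE$ over $\{u:u-\bar u\in H^1(\bbR)\}$, on which $\cE$ is finite.

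Next I would run the direct method. Along a minimizing sequence the bound on $\cE$ together with the lower bound on the weight gives a uniform bound on $\int(u_n')^2$, while a truncation argument (replacing $u_n$ by $\operatorname{mid}(-1,u_n,1)$ does not increase $\cE$) lets me assume $|u_n|\le1$. The functional is invariant under the discrete translations $u\mapsto u(\cdot-kT)$ because $g$ and $\phi_+$ are $T$-periodic; this is exactly what rescues compactness, since it lets me translate each $u_n$ so that it vanishes at a point of $[0,T]$ and thereby prevents the transition layer from escaping to infinity. One-dimensional Sobolev embedding then yields local uniform convergence to a limit $u$ with $u_n'\rightharpoonup u'$ in $L^2$; weak lower semicontinuity of the gradient term and Fatou's lemma for the potential term show that $u$ is a minimizer, the finiteness of $\cE$ forcing $u(\pm\infty)=\pm1$ and the normalization keeping $u$ nonconstant. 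ODE regularity makes $u$ classical, so $\phi=\phi_+u$ solves \eqref{eqphi}; and since $u\equiv\pm1$ are solutions, uniqueness for the initial value problem upgrades $|u|\le1$ to the strict bound $-1<u<1$, i.e.\ $-\phi_+<\phi<\phi_+$.

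The heart of the argument, and the step I expect to be the main obstacle, is the monotonicity \eqref{eqpos}: the $x$-dependence of the weight destroys the usual tools (reflection and rearrangement change $\cE$, the equation is not autonomous so sliding is unavailable, and $u'$ solves an inhomogeneous linear equation, so the maximum principle does not apply to it directly). My plan is to exploit the even symmetry of the potential together with minimality. First, evaluating the equation at an interior local extremum gives a sign constraint: since $-1<u<1$, every interior local maximum of $u$ has $u\ge0$ and every interior local minimum has $u\le0$. Second, because $\cE$ depends on $u$ only through $(u')^2$ and the even function $(u^2-1)^2$, the reflection $u\mapsto-u$ on any bounded maximal interval of constant sign (whose endpoints are zeros of $u$) leaves $\cE$ unchanged; applying this to the bounded sign-excursions (which are finite in number, as the zeros of a nontrivial solution are isolated and $u\to\pm1$ at infinity) produces another minimizer with a single sign change. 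For this minimizer the sign constraint forbids interior local minima where $u>0$ and interior local maxima where $u<0$, which together with the boundary limits forces $u$ to be nondecreasing; finally, if $u'(x_0)=0$ at an interior point then $u''(x_0)=0$ as well, so the equation gives $(u(x_0)^2-1)u(x_0)=0$ and hence $u(x_0)=0$, whence uniqueness for the initial value problem forces $u\equiv0$, a contradiction. Thus $u'>0$ everywhere.

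It remains to prove the exponential estimate \eqref{eqexp}. Near $+\infty$ I would set $v=1-u>0$ and linearize: the equation becomes $\tfrac12(\phi_+^2v')'=g\,\phi_+^4\,(2-v)(1-v)\,v$, whose reaction coefficient is bounded below by a positive constant because $g_{min}>0$ and $\phi_+$ is bounded away from $0$ (this is the nondegeneracy of the well). Comparing $v$ with an exponential supersolution $Ae^{-C_0x}$ on a half-line and invoking the maximum principle then gives $0<v\le Ae^{-C_0x}$, and the equation upgrades this to a matching bound on $v'$; the case $x\to-\infty$ is symmetric. Since $\phi-\phi_\pm=\phi_+(u\mp1)$ and $\phi_+,\phi_+'$ are bounded, this translates into \eqref{eqexp}.
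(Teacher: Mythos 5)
Your argument is essentially the paper's: the same Lassoued--Mironescu substitution $\phi=\phi_+u$ reducing \eqref{eqphi} to the weighted Allen--Cahn equation \eqref{eqw1}, the same energy minimized over heteroclinic data with the $T$-periodicity of $g$ and $\phi_+$ used to re-centre minimizing sequences, the same truncation giving $|u|<1$, and the same barrier argument for \eqref{eqexp}. The one step where you take a genuinely different route is the monotonicity \eqref{eqpos}. The paper proves \eqref{eqmonotWeak} by showing that a minimizer cannot take a value twice: if $w(x_1)=w(x_2)$ with $x_1<x_2$, one replaces $w$ on $(x_1,x_2)$ first by $|w|$ and then by the constant $w(x_1)$ without raising the energy, so $w$ is injective and the boundary conditions make it increasing. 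You instead combine the pointwise sign constraint at interior critical points (local maxima only where $u\ge0$, local minima only where $u\le0$, read off from the equation because $|u|<1$) with sign reflection on the bounded nodal intervals: the flipped competitor has the same energy, hence would be a minimizer and therefore a $C^1$ solution, which is impossible across a simple zero, so the minimizer has exactly one sign change and the sign constraints then force it to be nondecreasing. Both arguments are correct and of comparable length; yours avoids having to compare the potential term of $w$ with that of the constant $w(x_1)$ on $(x_1,x_2)$ (which needs a word when $w$ overshoots $w(x_1)$ there), at the price of invoking ODE uniqueness to guarantee that all zeros are simple and finite in number. The final upgrade from $u'\ge0$ to $u'>0$ --- a zero of $u'$ forces $u''=0$ and hence $u=0$ at that point, whence $u\equiv0$ --- is the same in both proofs, as is the derivation of \eqref{eqexp} from the decay of $1\mp u$.
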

\begin{proof}
Motivated from \cite{lassouedmironescu}, we set
\[
w=\frac{\phi}{\phi_+}.
\]
Then, it follows readily that equation (\ref{eqphi}) is equivalent
to
\begin{equation}\label{eqw1}
\left(\phi_+^2w' \right)'=2g(x)\phi_+^4w(w^2-1),
\end{equation}
while the asymptotic boundary conditions (\ref{eqasympt}) are
equivalent to
\begin{equation}\label{eqw2}
w(x)\to \pm 1\ \ \textrm{as}\ \ x\to \pm \infty.
\end{equation}
It is important to observe that, thanks to (\ref{eqphiBound}), the
differential operator in (\ref{eqw1}) is nonsingular.

Equation (\ref{eqw1}) is the Euler-Lagrange equation of the energy
\[
E(w)=\int_{-\infty}^{\infty}\left\{\phi_+^2 w_x^2
+g(x)\phi_+^4(1-w^2)^2\right\}dx.
\]
Standard variational techniques  can be then employed to find a
solution of (\ref{eqw1})-(\ref{eqw2}), a heteroclinic orbit that
is, as a minimizer of $E$ in the set
\begin{equation}\label{eqset}
w\in W^{1,2}_{loc}(\mathbb{R})\ \ \textrm{and}\ w(x)\to \pm 1\
\textrm{as}\  x\to \pm \infty,
\end{equation}
see for example \cite{alessioPeriod,bonhoure,sourdisHeter} and the
references therein. Loosely speaking, conditions (\ref{eqg2}) and
(\ref{eqphiBound}) prevent the minimizing sequences from becoming
too flat, while the $T$-periodicity of $g,\phi_+$ can be used to
``pull them back'', without affecting their energy, should  their
``center of mass'' escapes to infinity.

It holds that
\begin{equation}\label{eqwleq}
|w|<1,
\end{equation}
as can be verified by observing that $\min\{w,1\}$ and $\max\{w,-1
\}$ have less or equal energy than $w$ and then applying the
strong maximum principle. We claim that
\begin{equation}\label{eqmonotWeak} w_x\geq 0.\end{equation} Indeed, let
$w(x_1)=w(x_2)$ for some $x_1<x_2$. Without loss of generality, we
may assume that $w(x_1)>0$. Since the function in
$W^{1,2}_{loc}(\mathbb{R})$ which coincides with $w$ in
$\mathbb{R}\setminus (x_1,x_2)$ and is equal to $|w|$ in
$(x_1,x_2)$ has less or equal energy than $w$, we may also assume
that $w\geq 0$ in $(x_1,x_2)$. Then, the function in
$W^{1,2}_{loc}(\mathbb{R})$ which coincides with $w$ in
$\mathbb{R}\setminus (x_1,x_2)$ and is equal to $w(x_1)$ in
$(x_1,x_2)$ has less energy than $w$ which is impossible. In fact,
we can show that $w_x>0$, which implies at once (\ref{eqpos}), by
using (\ref{eqw1}), (\ref{eqwleq}) and (\ref{eqmonotWeak}).
Finally, the convergence in (\ref{eqw2}) can be shown to be
exponentially fast, which easily implies (\ref{eqexp}), by a
standard barrier argument.

The proof of the theorem is complete.
\end{proof}

\begin{rem}
In \cite{toresCMP}, the authors studied dark soliton solutions of
the cubic-quintic NLS
\[
i\psi_t+\psi_{xx}+V(x)\psi-g_1 |\psi|^2\psi-|\psi|^4 \psi=0,
\]
where the potential $V$ is smooth,  real and $T$-periodic, while
$g_1$ is a real constant. Plugging the ansatz (\ref{eqbs}) into
the above equation yields that $\phi$ must satisfy
\begin{equation}\label{eqphi2} \phi_{xx}+\left(V(x)-\lambda \right)\phi-g_1
\phi^3-\phi^5=0.
\end{equation}
If
\begin{equation}\label{eqRest5}\lambda<\min V,\end{equation} it was shown as before, by the method of upper and lower solutions, that (\ref{eqphi2}) has a
 $T$-periodic solution $\phi_+$
such that
\[
\rho_1<\phi_+<\rho_2,
\]
where
\[
\rho_j=\frac{1}{\sqrt{2}}\sqrt{\sqrt{g_1^2+4\lambda_j^2}-g_1}\ \
\textrm{with}\  \lambda_1^2=\min V-\lambda \ \textrm{and} \
\lambda_2^2=\max V-\lambda.
\]
 As before, we let $\phi_-=-\phi_+$. If
$V$ is assumed to be even, an analogous condition to
(\ref{eqrestrict}), but considerably more complicated,  was found
that guarantees the existence of a solution to (\ref{eqphi2}) that
verifies (\ref{eqasympt}).

Our approach carries over to this problem without too much effort.
Indeed, proceeding as in the proof of our main result, we find
that the corresponding $w$ should satisfy
\[
(\phi_+^2 w')'=g_1\phi_+^4w(w^2-1)+\phi_+^6w(w^4-1)
\]
together with the asymptotic behavior (\ref{eqw2}). Now, the
associated energy is
\[
E(w)=\int_{-\infty}^{\infty}\left\{\frac{1}{2}\phi_+^2 w_x^2
+\phi_+^4\left[\frac{g_1}{4}+\frac{\phi_+^2}{6}(2+w^2)
\right](1-w^2)^2\right\}dx.
\]
If $g_1\geq 0$, the aforementioned variational arguments apply
directly to show that there exists a minimizer of the energy in
the set described  in (\ref{eqset}), without the need of imposing
any further condition other than (\ref{eqRest5}). The case $g_1<0$
is more subtle and requires a further investigation. Clearly, a
sufficient condition for the minimization argument to go through
and produce the desired solution is
\[
\frac{g_1}{4}+\frac{\rho_1^2}{3}\geq 0,
\]
which can always be made possible by choosing $-\lambda>0$
sufficiently large.
\end{rem}

 \section*{Acknowledgements} This research was supported by the
ARISTEIA (Excellence) programme ``Analysis of discrete, kinetic
and continuum models for elastic and viscoelastic response'' of
the Greek Secretariat of Research.

\end{document}